\pgfplotsset{compat=newest}
\theoremstyle{plain}
 \newtheorem{thm}{Theorem}[section]
\newtheorem{thm*}{Theorem}
 \newtheorem{lem}[thm]{Lemma}
 \newtheorem{prop}[thm]{Proposition}
 \numberwithin{equation}{section} 
\numberwithin{figure}{section} 
 \theoremstyle{plain}
 \theoremstyle{definition}
 \newtheorem{rem}[thm]{Remark}
\newcommand{\A}{{{\mathbb A}}}
\newcommand{\bH}{{{\bf H}}}
\newcommand{\calA}{{{\mathcal A}}}
\newcommand{\fH}{{{\mathfrak H}}}
\newcommand{\calE}{{{\mathcal E}}}
\newcommand{\fp}{{{\mathfrak p}}}
\newcommand{\C}{{{\mathbb C}}}
\newcommand{\R}{{{\mathbb R}}}
\newcommand{\calET}{{{\mathcal E}{\mathcal T}}}
\newcommand{\fET}{{{\mathfrak E}{\mathfrak T}}}
\newcommand{\X}{{{\mathbb X}}}
\newcommand{\fX}{{{\mathfrak X}}}
\newcommand{\bp}{{{\bf p}}}
\newcommand{\binfty}{{{\bf \infty}}}
\date{\today\\
2010 \emph{Mathematics Subject Classifications.} 32M99, 51F99.\\
\emph{Key words.} Heisenberg group, Kor\'anyi metric, equidistant triples}
\begin{document}

\title[Configuration space of equidistant triples]{The configuration space of equidistant triples in the Heisenberg group}

\author{
Ioannis D. Platis 
}

\begin{abstract}
We prove that the configuration space of equidistant triples on the Heisenberg group equipped with the Kor\'anyi metric, is isomorphic to a hypersurface of $\R^3$.
\end{abstract}

\address{Department of Mathematics and Applied Mathematics, University Campus, University of Crete, GR-70013 Voutes, Heraklion Crete, Greece}
\email{jplatis@math.uoc.gr}

\maketitle

\section{Introduction}

Let $\fH$ be the first Heisenberg group equipped with the Kor\'anyi distance $d$. An equidistant triple is a triple of points $P=(p_1,p_2,p_3)\in\fH$ such that
$$
d(p_1,p_2)=d(p_2,p_3)=d(p_3,p_1).
$$ 
Denote by $\calET$ the space of equidistant triples in $\fH$. Then the similarity group $${G}={\rm Sim}(\fH, d)=\fH\times\R\times \R^*_+,$$
acts diagonally on $\calET$:
$$
\left(g,(p_1,p_2,p_3)\right)\mapsto\left(g(p_1),g(p_2),g(p_3)\right).
$$
We denote by $\fET$ the quotient of this action; this is the {\it configuration space of $G$-equivalent equidistant triples in $\fH$}. In this paper we are dealing with the problem of parametrising $\fET$. The problem is addressed and solved in a different manner in \cite{CT} (see Proposition 4.6 there). We prove here the following theorem: 

\begin{thm}\label{thm:basic}
The configuration space $\fET$ of $G$-equivalent equidistant triples is in bijection with the hypersurface $\calE$ of $\R^3$ which is defined by
$$
\calE=\left\{(a,b,c)\in[-2\pi/3,2\pi/3]^3\;|\;\cos a+\cos b+\cos c=\frac{3}{2}\right\}.
$$
\end{thm}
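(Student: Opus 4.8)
The plan is to trade $\fET$ for a small explicit model built from the group law on $\fH$, and then to match that model with $\calE$ by two elementary substitutions. Realise $\fH$ as $\C\times\R$ with product $(z,t)\cdot(w,u)=(z+w,\,t+u+2\,\mathrm{Im}(\bar z w))$, Kor\'anyi gauge $\|(z,t)\|=(|z|^{4}+t^{2})^{1/4}$, and $d(p,q)=\|p^{-1}q\|$; recall that $G$ is generated by the left translations, by the rotations $R_\theta\colon(z,t)\mapsto(e^{i\theta}z,t)$ and by the dilations $\delta_\lambda\colon(z,t)\mapsto(\lambda z,\lambda^{2}t)$, the last two being automorphisms of $\fH$. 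First I would attach to an equidistant triple $P=(p_1,p_2,p_3)$ — which, $d$ being a metric, either is constant (a single $G$-orbit, which I set aside) or has three distinct vertices — the elements $q_1=p_1^{-1}p_2$, $q_2=p_2^{-1}p_3$, $q_3=p_3^{-1}p_1$. Then $q_1q_2q_3=e$, the $q_i$ are invariant under left translations of $P$ and are sent to $R_\theta\delta_\lambda(q_i)$ by the remaining generators, and $d(p_1,p_2)=\|q_1\|$, $d(p_2,p_3)=\|q_2\|$, $d(p_3,p_1)=\|q_3\|$, so equidistance is the condition $\|q_1\|=\|q_2\|=\|q_3\|$; conversely such a triple comes from $P=(e,q_1,q_1q_2)$, and $P$ is recovered from the $q_i$ up to left translation. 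Scaling the common norm to $1$ by a dilation, this yields a bijection of $\fET$ with $\mathcal{M}/{\sim}$, where $\mathcal{M}=\{(q_1,q_2,q_3)\in\fH^{3}:q_1q_2q_3=e,\ \|q_1\|=\|q_2\|=\|q_3\|=1\}$ and $\sim$ is the residual diagonal rotation action.

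Next I would coordinatise $\mathcal{M}/{\sim}$. Writing $q_i=(z_i,s_i)$, the norm condition reads $|z_i|^{4}+s_i^{2}=1$, so $s_i\in[-1,1]$ and $r_i:=|z_i|=(1-s_i^{2})^{1/4}$; expanding $q_1q_2q_3=e$ gives $z_1+z_2+z_3=0$ and $s_1+s_2+s_3=-2A$ with $A:=\mathrm{Im}(\bar z_1z_2)$, and once $\sum z_i=0$ one has $\mathrm{Im}(\bar z_1z_2)=\mathrm{Im}(\bar z_2z_3)=\mathrm{Im}(\bar z_3z_1)$ together with the identity $4A^{2}=2(r_1^{2}r_2^{2}+r_2^{2}r_3^{2}+r_3^{2}r_1^{2})-(r_1^{4}+r_2^{4}+r_3^{4})$. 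Since the $s_i$ are rotation-invariant and a planar triple $(z_1,z_2,z_3)$ with prescribed moduli and vanishing sum exists (precisely when the $r_i$ satisfy the triangle inequality) and is unique up to a rotation and a reflection — the reflection being fixed by the prescribed sign of $A$ in $s_1+s_2+s_3=-2A$ — the map $(q_1,q_2,q_3)\mapsto(s_1,s_2,s_3)$ is a bijection of $\mathcal{M}/{\sim}$ with
\[
\s=\Bigl\{(s_1,s_2,s_3)\in[-1,1]^{3}:\ \textstyle\sum_{i<j}\bigl(\sqrt{(1-s_i^{2})(1-s_j^{2})}-s_is_j\bigr)=\tfrac32\Bigr\};
\]
indeed, squaring $s_1+s_2+s_3=-2A$ and inserting $r_i^{4}=1-s_i^{2}$, $r_i^{2}=\sqrt{1-s_i^{2}}$ into the identity for $A$ turns it into exactly this equation, whose left-hand side equals $(s_1+s_2+s_3)^{2}\ge0$ and so automatically forces the triangle inequality on the $r_i$, so nothing further is needed to reconstruct the configuration.

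Finally I would identify $\s$ with $\calE$. Substituting $s_i=\cos\phi_i$ with $\phi_i\in[0,\pi]$ (a bijection $[-1,1]\to[0,\pi]$) and using $\sqrt{1-s_i^{2}}=\sin\phi_i\ge0$, each summand becomes $\sin\phi_i\sin\phi_j-\cos\phi_i\cos\phi_j=-\cos(\phi_i+\phi_j)$, so $\s$ corresponds to $\{\phi\in[0,\pi]^{3}:\cos(\phi_1+\phi_2)+\cos(\phi_2+\phi_3)+\cos(\phi_3+\phi_1)=-\tfrac32\}$. The invertible affine change $a=\pi-\phi_1-\phi_2$, $b=\pi-\phi_2-\phi_3$, $c=\pi-\phi_3-\phi_1$ rewrites this as $\cos a+\cos b+\cos c=\tfrac32$, so I only have to check that, on the resulting surface, $\phi\in[0,\pi]^{3}$ is equivalent to $(a,b,c)\in[-2\pi/3,2\pi/3]^{3}$. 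In one direction, $\phi\in[0,\pi]^{3}$ gives $a\in[-\pi,\pi]$ and $\cos a=\tfrac32-\cos b-\cos c\ge-\tfrac12$, hence $|a|\le2\pi/3$, and symmetrically for $b,c$. In the other direction the inverse affine map gives $\phi_1=\tfrac12(\pi-a+b-c)$ and its analogues, so $\phi\in[0,\pi]^{3}$ amounts to $|b+c-a|\le\pi$ and its two cyclic permutations; and $|b+c-a|\le\pi$ holds on $\calE$ because $b+c-a$, being continuous on the compact surface $\calE$, attains its extrema $\pm\pi$ at interior critical points (located by Lagrange multipliers) and takes only smaller values at the finitely many points where $\calE$ meets the boundary of the cube, the other two inequalities following from the $S_3$-symmetry of $\calE$. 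Chaining the three bijections together gives $\fET\cong\calE$.

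The step I expect to be the main obstacle is the coordinatisation: one must verify carefully that $(q_1,q_2,q_3)\mapsto(s_1,s_2,s_3)$ is a genuine bijection onto $\s$ — in particular that the reflection ambiguity left by the moduli $r_i$ is resolved by exactly the sign of $A$ imposed through the vertical coordinate — and that the equation defining $\s$ encodes the equidistance relation exactly, with nothing dropped or spuriously added. Once this is in place, the first and third stages are essentially formal, the only substantive computation being the extremal analysis of $b+c-a$ over $\calE$.
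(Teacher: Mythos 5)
Your proof is correct, and it takes a genuinely different route from the paper's. The paper passes to the quadruple $\fp=(p_1,\infty,p_2,p_3)$, shows equidistance is equivalent to $|\X_1(\fp)|=|\X_2(\fp)|=1$, specialises the defining equations of Falbel's cross-ratio variety $\fX$ to obtain $\cos a+\cos b+\cos c=3/2$ for the arguments of the cross-ratios, and imports injectivity and surjectivity from the ${\rm PU}(2,1)$-configuration theory of four boundary points (an explicit lift construction for surjectivity, and Falbel/Parker--Platis uniqueness for injectivity). You never leave the Heisenberg group: the side elements $q_i=p_i^{-1}p_{i+1}$ with $q_1q_2q_3=e$ play the role of fixing $\infty$ and killing the $\fH$-factor of $G$, and the pair of relations $s_1+s_2+s_3=-2\Im(\bar z_1z_2)$ and Heron's identity $4A^2=2\sum_{i<j}r_i^2r_j^2-\sum_i r_i^4$ replaces the cross-ratio variety equations. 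I checked the two points you flag as delicate and both hold: on $\s$ the Heron expression equals $(s_1+s_2+s_3)^2\ge 0$, so the triangle inequality on the $r_i$ is automatic, and the reflection ambiguity in the planar triple is resolved by the prescribed sign of $\Im(\bar z_1z_2)$ (when that quantity vanishes the $z_i$ are collinear through the origin and the reflection acts trivially, so uniqueness up to rotation survives); and the Lagrange analysis does close up, since the multiplier condition gives $\sin a=-\sin b=-\sin c$, whose only feasible solutions on $\calE$ are $a=-b$, $c=b$, $\cos b=1/2$, yielding extrema $b+c-a=3b=\pm\pi$, while on a face of the cube (say $b=2\pi/3$) the constraint forces $a=c=0$ and $|b+c-a|=2\pi/3<\pi$. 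Two remarks. First, your construction quotients only by rotations and dilations, not by the involution $j$; this is consistent with the paper's $G$ and is essential, since including $j$ would identify $(a,b,c)$ with $(-a,-b,-c)$ and the bijection would fail. Second, what each approach buys: yours is elementary and self-contained (group law plus planar triangle geometry), whereas the paper's exhibits $\fET$ as a subvariety of $\fX$ and keeps Cartan's angular invariant in play, which is what drives the analysis of the $\C$-circle locus in Section 3.4.
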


The proof of Theorem \ref{thm:basic} relies upon the use of the cross-ratio variety $\fX$; this is a 4-dimensional variety parametrising the ${\rm PU}(2,1)$-configuration space of pairwise distinct quadruples on the boundary $\partial\bH^2_\C$ of complex hyperbolic plane. In fact, $\fET$ may be viewed as a 2-dimensional subvariety of $\fX$.

This paper is organised as follows: In Section \ref{sec:prel} we review standard facts about complex hyperbolic plane and the Heisenberg group, as well as about cross-ratio variety. In Section \ref{sec:mod} we prove Theorem \ref{thm:basic} and discuss the particular case of equidistant triples lying in a $\C$-circle in Section \ref{sec:C}.

\medskip

{\it Acknowledgements}. I wish to thank Vassilis Chousionis for suggesting the problem to me, and also Viktor Schroeder for fruitful discussions.

\section{Preliminaries}\label{sec:prel}
The material of this section is standard; a general reference is Goldman's book, \cite{Gol}. In Section \ref{sec:chs} we review complex hyperbolic plane, its boundary and the Heisenberg group. Cartan's angular invariant and complex cross-ratios are in Section \ref{sec:invariants}. Finally, a brief overview of the cross-ratio variety and the ${\rm PU}(2,1)$-configuration of four pairwise distinct points in the boundary of complex hyperbolic plane is found in Section \ref{sec:X-XR}.

\subsection{Complex hyperbolic plane and Heisenberg group}\label{sec:chs}
We consider the vector space $\mathbb{C}^{2,1}$, that is,  $\mathbb{C}^{3}$  with
the Hermitian form of signature $(2,1)$ given by
$$
\left\langle {\bf {z}},{\bf {w}}\right\rangle 
=z_{1}\overline{w}_{3}+z_{2}\overline{w}_{2}+z_{3}\overline{w}_{1}.
$$
We next consider the following subspaces of ${\mathbb C}^{2,1}$:
\begin{equation*}
V_-  =  \Bigl\{{\bf z}\in{\mathbb C}^{2,1}\ :\ 
\langle{\bf z},\,{\bf z} \rangle<0\Bigr\}, \quad
V_0  =  \Bigl\{{\bf z}\in{\mathbb C}^{2,1}\setminus\{{\bf 0}\}\ :\ 
\langle{\bf z},\,{\bf z} \rangle=0\Bigr\}.
\end{equation*}
Denote by  ${\mathbb P}:{\mathbb C}^{2,1}\setminus\{0\}\longrightarrow {\mathbb C}P^2$ 
the canonical projection onto complex projective space. Then the
{\sl complex hyperbolic plane} ${\bf H}_{\mathbb{C}}^{2}$
is defined to be ${\mathbb P}V_-$ and its boundary
$\partial{\bf H}^2_{\mathbb C}$ is ${\mathbb P}V_0$.
Hence  we have
$$
{\bf H}^2_{\mathbb C} = \left\{ (z_1,\,z_2)\in{\mathbb C}^2
\ : \ 2\Re(z_1)+|z_2|^2<0\right\},
$$
and in this manner, ${\bf H}^2_{\mathbb C}$ is the Siegel domain in 
${\mathbb C}^2$.

There are two distinguished points in $V_0$  which we denote by 
${\bf o}$ and $\binfty$:
$$
{\bf o}=\left[\begin{matrix} 0 \\ 0 \\ 1 \end{matrix}\right], \quad
\binfty=\left[\begin{matrix} 1 \\ 0 \\ 0 \end{matrix}\right].
$$
Let ${\mathbb P}{\bf o}=o$ and ${\mathbb P}\binfty=\infty$. 
Then 
$$
\partial{\bf H}^2_{\mathbb C}\setminus\{\infty\} 
=\left\{ (z_1,\,z_2)\in{\mathbb C}^2
\ : \ 2\Re(z_1)+|z_2|^2=0\right\},
$$
and in particular, $o=(0,0)\in\C^2$. 

Conversely, if we are given a point $z=(z_1,z_2)$ of 
${\mathbb C}^2$, then the point 
$$
{\bf z}=\left[\begin{matrix} z_1 \\ z_2 \\ 1 \end{matrix}\right].
$$
is called the {\sl standard lift} of $z$. Therefore the standard lifts of points of the complex hyperbolic plane and its boundary (except the point at infinity) are  vectors of $V_-$ and $V_0$ respectively with the third inhomogeneous coordinate equal to 1.

Complex hyperbolic plane $ {\bf H}_{\mathbb{C}}^{2}$ is a K\"ahler manifold; its K\"ahler structure is given by the Bergman metric. 
The holomorphic sectional curvature  
equals to $-1$ and its real sectional curvature
is pinched between $-1$ and $-1/4$.
The full group of holomorphic isometries 
is the \textsl{projective
unitary group}
$${\rm PU(2,1)}={\rm SU(2,1)}/\{ I,\omega I,\omega^{2}I\},$$
where $\omega$ is a non-real cube root of unity  (that is ${\rm SU}(2,1)$
is a 3-fold covering of ${\rm PU}(2,1)$). 
There are two ways (up to ${\rm PU}(2,1)$ conjugacy) to embed real hyperbolic plane into complex hyperbolic plane; that is, as $\bH^1_\C$ as well as $\bH^2_\R$. These embeddings give rise to  complex lines, i.e., isometric images of the embedding of $\bH^1_\C$ into $\bH^2_\C$ and  Lagrangian planes, i.e., isometric images of $\bH^2_\R$ into $\bH^2_\C$, respectively.

There is an identification of the boundary of the Siegel domain with
the one point compactification of $\C\times\R$: A
 finite point $z$  in the boundary of the Siegel domain has a  standard
lift of the form
$$
{\bf z}
=\left[\begin{matrix} -|z|^2+it \\ \sqrt{2}z\\ 1\end{matrix}\right].
$$
The unipotent stabiliser at infinity acts simply transitively and gives the set of these points the structure of a 2--step nilpotent Lie group, namely the Heisenberg group $\fH$. This is $\C\times\R$ with group law:
$$
(z,t)\star (w,s)=(z+w,t+s+2\Im(z\overline{w})).
$$
The Heisenberg norm (Kor\'anyi gauge) is given by
$$
\left|(z,t)\right|_\fH=\left| \calA(z,t)\right|^{1/2},\quad\text{where}\quad \calA(z,t)=|z|^2-it.
$$
From this norm arises a metric, the Kor\'anyi-Cygan (K-C) metric, on $\fH$ by the relation
$$
d\left((z,t),\,(w,s)\right)
=\left|(z,t)^{-1}\star (w,s)\right|_\fH.
$$
The K-C metric is invariant under 
\begin{enumerate}
 \item [{a)}] left-actions $L_{(w,s)}$ of $\fH$, \; $(z,t)\to(w,s)\star (z,t)$, \; $(w,s)\in\fH$;
\item [{b)}] rotations $R_\phi$, \;  $(z,t)\mapsto(ze^{i\phi},t)$, \; $\phi\in\R$;
\end{enumerate}
\begin{enumerate}
\item [{c)}] involution $j$, $j(z,t)=(\overline{z},-t)$.
\end{enumerate} 
These form the  group ${\rm Isom}(\fH,d)$ of {\it Heisenberg isometries}. Note that all the above are orientation-preserving. 

The K-C metric is also scaled up to multiplicative constants by the action of  
 \begin{enumerate}
 \item [{d)}] Heisenberg dilations $D_r$, $(z,t)\mapsto$ $(rz,r^2t)$, $r\in\R_+^*.$ 
 \end{enumerate}
and there is also an inversion, defined for each $p=(z,t)\in\fH$, $p\neq o$, by $$(z,t)\mapsto\;\left(\frac{z}{-|z|^2+it},-\frac{t}{|-|z|^2+it|^2|}\right),$$
which satisfies
$$
d_\fH(R(p),R(p'))=\frac{d_\fH(p,p')}{d_\fH(p,o)d_\fH(p',o)}.
$$
The similarity group $G={\rm Sim}(\fH,d)$ comprises compositions of maps of the form a), b), d). Clearly, $G=\fH\times \R\times \R^*_+$.

\subsubsection{$\R$-circles and $\C$-circles}\label{sec:circles}

$\R$-circles are boundaries of Lagrangian planes and $\C$-circles are boundaries of complex lines. They come in two flavours, infinite ones (i.e., containing the point at infinity) and finite ones. We refer to \cite{Gol} for more more details about these curves.

\subsection{Cartan's Angular Invariant }\label{sec:invariants}

Given a triple $(p_1,p_2,p_3)$ of points at the boundary $\partial\bH^2_\C$ the Cartan's angular invariant $\A(p_1,p_2,p_3)$ is defined by
$$
\A(p_1,p_2,p_3)=\arg(-\langle\bp_1,\bp_2\rangle\langle \bp_2,\bp_3\rangle\langle\bp_3,\bp_1\rangle),
$$
where $\bp_i$ are lifts of $p_i$, $i=1,2,3$.
The Cartan's angular invariant lies in $[-\pi/2,\pi/2]$, is independent of the choice of the lifts and remains invariant under the diagonal action of ${\rm PU}(2,1)$. Any other permutation of points produces angular invariants which differ from the above possibly up to sign. The following propositions are in \cite{Gol} to which we also refer the reader for further details:

\begin{prop}
Let $(p_1,p_2,p_3)$ be a triple of points lying in $\partial\bH^2_\C$ and let also $\A=\A(p_1,p_2,$ $p_3)$ be their Cartan's angular invariant. Then:
\begin{enumerate}
\item All points lie in an $\R$-circle if and only if $\A=0$.
\item All points lie in a $\C$-circle if and only if $\A=\pm\pi/2$.
\end{enumerate}
\end{prop}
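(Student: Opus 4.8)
The plan is to exploit the ${\rm PU}(2,1)$-invariance of $\A$ together with the fact that $\R$-circles (resp.\ $\C$-circles) form a single ${\rm PU}(2,1)$-orbit, so that both equivalences may be checked on a convenient normal form. Since ${\rm PU}(2,1)$ acts transitively on ordered pairs of distinct points of $\partial\bH^2_\C$, I would first apply an element carrying $(p_1,p_2)$ to $(\infty,o)$; the third point then becomes a finite $(z,t)\in\fH$ with $(z,t)\neq(0,0)$. As $\A$ and the two geometric conditions are all ${\rm PU}(2,1)$-invariant, it suffices to prove the statement for the triple $(\infty,o,(z,t))$.

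Next I would compute the bracketed product explicitly. With the standard lifts $\binfty=(1,0,0)^{\!\top}$, ${\bf o}=(0,0,1)^{\!\top}$ and $\bp_3=(-|z|^2+it,\sqrt2\,z,1)^{\!\top}$ one finds $\langle\binfty,{\bf o}\rangle=1$, $\langle\bp_3,\binfty\rangle=1$ and $\langle{\bf o},\bp_3\rangle=-(|z|^2+it)$, so that
$$
-\langle\binfty,{\bf o}\rangle\langle{\bf o},\bp_3\rangle\langle\bp_3,\binfty\rangle=|z|^2+it,
\qquad\text{hence}\qquad
\A(\infty,o,(z,t))=\arg\bigl(|z|^2+it\bigr).
$$
Since $|z|^2\ge0$ this lies in $[-\pi/2,\pi/2]$, consistently with the stated range, and one reads off at once that $\A=0\iff t=0$ (forcing $z\neq0$) and $\A=\pm\pi/2\iff z=0$ (forcing $t\neq0$).

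It then remains to match the two cases with the standard circles through $o$ and $\infty$. For the $\R$-circle case I would use that the boundary of the standard Lagrangian plane is the real axis $\{(x,0):x\in\R\}\cup\{\infty\}$, whose finite points have real lifts $(-x^2,\sqrt2\,x,1)^{\!\top}$; hence all Hermitian products of points on it are real, the bracketed product is real, and $\A\in\{0,\pi\}$ is forced to $0$ by the range. Conversely, when $t=0$ a rotation $R_{-\arg z}\in{\rm PU}(2,1)$ sends $(z,0)$ to $(|z|,0)$, placing the whole triple on this $\R$-circle; transitivity of ${\rm PU}(2,1)$ on $\R$-circles then yields statement (1). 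For the $\C$-circle case the standard complex line $\{z_2=0\}$ has boundary the vertical axis $\{(0,t):t\in\R\}\cup\{\infty\}$, whose finite points have lifts $(it,0,1)^{\!\top}$; any two such lifts have purely imaginary Hermitian product, so the bracketed product is purely imaginary and $\A=\pm\pi/2$. Since $z=0$ places $(\infty,o,(0,t))$ on this $\C$-circle, transitivity of ${\rm PU}(2,1)$ on $\C$-circles gives statement (2).

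Finally, the main obstacle is not the computation, which is elementary, but the two structural inputs it rests on: the transitivity of ${\rm PU}(2,1)$ on $\R$-circles and on $\C$-circles (equivalently, on Lagrangian planes and on complex lines), which legitimises the reduction to the standard circles, and the sign bookkeeping that keeps $\A$ inside $[-\pi/2,\pi/2]$. I would treat the transitivity statements as standard (cf.\ \cite{Gol}) and settle the sign issue by the explicit identity above, in which $-\langle\binfty,{\bf o}\rangle\langle{\bf o},\bp_3\rangle\langle\bp_3,\binfty\rangle=|z|^2+it$ manifestly has non-negative real part, pinning down the correct branch of the argument.
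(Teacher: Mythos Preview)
Your proof is correct. The paper itself does not prove this proposition; it simply states it and refers to Goldman's book \cite{Gol} for the details, so there is no in-paper argument to compare against. Your approach---normalising two points to $(\infty,o)$ via the double transitivity of ${\rm PU}(2,1)$ on $\partial\bH^2_\C$, computing $\A=\arg(|z|^2+it)$ directly from the standard lifts, and then identifying the conditions $t=0$ and $z=0$ with the standard infinite $\R$-circle and $\C$-circle respectively---is essentially the standard argument one finds in that reference.
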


\begin{prop}
 Suppose that $p_i$ and $p'_i$, $i=1,2,3$, are points in $\partial{\bH_\C^2}$. If there exists a holomorphic isometry $g$ of $\bH^2_\C$ such that $g(p_i)=p'_i$, $i=1,2,3$, then $\A(p_1,p_2,p_3)=\A(p'_1,p'_2,p'_3)$. Conversely, if $\A(p_1,p_2,p_3)=\A(p'_1,p'_2,p'_3)$, then there exists a holomorphic isometry $g$ of $\bH^2_\C$ such that $g(p_i)=p'_i$, $i=1,2,3$. This isometry is unique unless  $p_i$, $i=1,2,3$,  lie in a $\C$-circle. 
\end{prop}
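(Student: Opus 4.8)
The plan is to prove the two implications separately and to read off uniqueness from a normal form. Throughout I assume the $p_i$ (and likewise the $p_i'$) are pairwise distinct, since otherwise some inner product $\langle\bp_i,\bp_j\rangle$ vanishes and the argument defining $\A$ is undefined.

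The forward implication is a one-line consequence of the definition. Lifting $g$ to $G\in{\rm SU}(2,1)$, the equalities $g(p_i)=p'_i$ mean $G\bp_i=\lambda_i\bp'_i$ for nonzero scalars $\lambda_i\in\C$. Since $G$ preserves the Hermitian form, $\langle\bp_i,\bp_j\rangle=\langle G\bp_i,G\bp_j\rangle=\lambda_i\overline{\lambda_j}\,\langle\bp'_i,\bp'_j\rangle$, and multiplying the three factors in the definition of the angular invariant the prefactor collapses to $|\lambda_1\lambda_2\lambda_3|^2>0$. Hence the two triple products differ by a positive real, so their arguments, and therefore the two Cartan invariants, coincide.

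For the converse the plan is to exhibit a normal form for a triple depending only on $\A$. Since ${\rm PU}(2,1)$ is doubly transitive on $\partial\bH^2_\C$, I first move $p_1\mapsto o$ and $p_2\mapsto\infty$, whose lifts are ${\bf o}$ and $\binfty$. The holomorphic stabiliser of this ordered pair is exactly the torus $T=\{D_rR_\phi:r>0,\ \phi\in\R\}$ of dilations and rotations; indeed any isometry fixing the two null lines $\C{\bf o}$, $\C\binfty$ also preserves their common orthogonal line and is therefore diagonal. Writing the remaining point as a finite Heisenberg point $p_3=(z,t)$, a direct computation with the lifts gives $\langle{\bf o},\binfty\rangle\langle\binfty,\bp_3\rangle\langle\bp_3,{\bf o}\rangle=-\calA(z,t)$, so that $\A(p_1,p_2,p_3)=\arg\calA(z,t)$. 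Because $R_\phi$ leaves $\calA$ unchanged while $D_r$ scales it by the positive factor $r^2$, the argument of $\calA$ is the only $T$-invariant of $p_3$; using a rotation to make $z\ge0$ real and a dilation to make $|\calA|=1$ sends $p_3$ to the single point $q_\A=\bigl(\sqrt{\cos\A},\,-\sin\A\bigr)$, which depends only on $\A$. Thus every triple with invariant $\A$ is ${\rm PU}(2,1)$-equivalent to the standard triple $(o,\infty,q_\A)$, and two triples with equal invariant are equivalent to each other, which is the converse.

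Uniqueness is then read off from the pointwise stabiliser of the standard triple. If $g,h$ both carry $(p_1,p_2,p_3)$ to $(p'_1,p'_2,p'_3)$, then $h^{-1}g$ fixes each $p_i$ and so lies in a conjugate of the stabiliser of $(o,\infty,q_\A)$. An element $D_rR_\phi\in T$ fixes $q_\A=(z_0,t_0)$ iff $r^2t_0=t_0$ and $rz_0e^{i\phi}=z_0$. When $z_0\ne0$ this forces $r=1,\ \phi=0$, so the stabiliser is trivial and $g=h$; this is the case $\A\ne\pm\pi/2$. When $z_0=0$, that is $\A=\pm\pi/2$, the angle $\phi$ is unconstrained, the stabiliser is the full circle $\{R_\phi\}$, and a one-parameter family of isometries realises the correspondence. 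By the previous proposition $\A=\pm\pi/2$ is exactly the condition that $p_1,p_2,p_3$ lie on a $\C$-circle, so uniqueness holds precisely off that case. The genuinely substantive step is the normal-form reduction in the converse—identifying the stabiliser of the ordered pair with the dilation--rotation torus and checking that $\arg\calA$ is its complete invariant on the third point; the forward direction, the identity $\A=\arg\calA(z,t)$, and the final stabiliser computation are all short direct calculations once this normal form is in place.
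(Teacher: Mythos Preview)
The paper does not actually prove this proposition: it is quoted from Goldman's book (the sentence preceding the two propositions reads ``The following propositions are in \cite{Gol} to which we also refer the reader for further details''), so there is no in-paper argument to compare against.

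Your proof is correct and is the standard one. The forward direction is the usual triple-product computation; for the converse you use double transitivity to send $(p_1,p_2)$ to $(o,\infty)$, identify the holomorphic stabiliser of that ordered pair with the dilation--rotation torus, and verify that $\arg\calA(z,t)$ is the complete $T$-invariant of the third point, yielding the normal form $q_\A=(\sqrt{\cos\A},-\sin\A)$. The uniqueness analysis via the pointwise stabiliser of $(o,\infty,q_\A)$ is also right: when $\cos\A\neq0$ the stabiliser is trivial, and when $\A=\pm\pi/2$ (equivalently the triple lies on a $\C$-circle) it is the rotation circle. One very small remark: your justification that the stabiliser of $(o,\infty)$ is diagonal (``preserves their common orthogonal line'') is a touch terse---spelling out that the $\langle\cdot,\cdot\rangle$-orthogonal complement of $\C{\bf o}\oplus\C\binfty$ is the middle coordinate axis would make it airtight---but the claim is standard and correct.
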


\subsection{Cross-ratio variety and the configuration space}\label{sec:X-XR}
Given a  quadruple of pairwise distinct points $\fp=(p_1,p_2,p_3,p_4)$ in $\partial\bH_\C^2$, we define their complex  cross-ratio as follows:  
$$
\X(p_1,p_2,p_3,p_4)=\frac{\langle \bp_3,\bp_1\rangle \langle \bp_4,\bp_2\rangle}{\langle \bp_4,\bp_1\rangle \langle \bp_3,\bp_2\rangle},
$$
where $\bp_i$ are lifts of $p_i$, $i=1,2,3,4$, see also \cite{KR1}, \cite{PP}, \cite{PP2}. 
The cross-ratio is independent of the choice of lifts and remains invariant under the diagonal action of ${\rm PU}(2,1)$. We stress here that for points in the Heisenberg group, the square root of its absolute value is
\begin{equation*}
 |\X(p_1,p_2,p_3,p_4)|^{1/2}=\frac{d_\fH(p_4,p_2)\cdot d_\fH(p_3,p_1)}{d_\fH(p_4,p_1)\cdot d_\fH(p_3,p_2)}.
\end{equation*}

Given a quadruple $\fp=(p_1,p_2,$ $p_3,p_4)$ of pairwise distinct points in the boundary $\partial\bH^2_\C$, all possible permutations of points gives us 24 complex cross-ratios corresponding to $\fp$. Due to  symmetries, see \cite{F}, Falbel showed that all cross-ratios corresponding to a quadruple of points depend on three cross-ratios which satisfy two real equations. Indeed, the following proposition holds; for its proof, see for instance \cite{PP}.

\begin{prop}\label{prop:cross-ratio-equalities}
Let $\fp=(p_1,p_2,p_3,p_4)$ be any quadruple of pairwise distinct points in $\partial \bH^2_\C$. Let
$$
\X_1(\fp)=\X(p_1,p_2,p_3,p_4),\quad \X_2(\fp)=\X(p_1,p_3,p_2,p_4),\quad \X_3(\fp)=\X(p_2,p_3,p_1,p_4).
$$
Then
\begin{eqnarray}\label{eq:cross1}
 &&
|\X_3|^2=|\X_2|^2/|\X_1|^2,\\
&&\label{eq:cross2}
2|\X_1|^2\Re(\X_3)=|\X_1|^2+|\X_2|^2-2\Re(\X_1)-2\Re(\X_2)+1.
\end{eqnarray}
\end{prop}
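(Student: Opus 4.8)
The plan is to reduce both relations to elementary algebra on the Gram matrix of the lifts, so that neither requires choosing a normalised position for $\fp$. I would write $a_{ij}=\langle\bp_i,\bp_j\rangle$ for fixed lifts $\bp_i$, and lean on two structural facts: since the form is Hermitian, $a_{ji}=\overline{a_{ij}}$ and hence $|a_{ij}|=|a_{ji}|$; and since each $p_i$ lies on $\partial\bH^2_\C$, its lift is null, so $a_{ii}=0$. Unwinding the definition of $\X$, the three cross-ratios become the monomials $\X_1=a_{31}a_{42}/(a_{41}a_{32})$, $\X_2=a_{21}a_{43}/(a_{41}a_{23})$, $\X_3=a_{12}a_{43}/(a_{42}a_{13})$. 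Because $\X$ is independent of the chosen lifts, any such representation is legitimate.

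The first identity is then immediate: forming $|\X_2|^2/|\X_1|^2$ and cancelling with $|a_{32}|=|a_{23}|$, $|a_{12}|=|a_{21}|$, $|a_{13}|=|a_{31}|$ reduces it to $|a_{21}|^2|a_{43}|^2/(|a_{31}|^2|a_{42}|^2)$, which is exactly $|\X_3|^2$. This is a one-line cancellation carrying no content beyond Hermitian symmetry.

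The second identity is where the geometry enters, and I expect it to be the main obstacle. The key input is that the four lifts span a subspace of $\C^{2,1}$ of complex dimension at most three, so the Hermitian Gram matrix $G=(a_{ij})$ is singular: factoring $G$ through the $3\times4$ matrix of lifts gives $\operatorname{rank}G\le 3$, whence $\det G=0$. Since the diagonal of $G$ vanishes, only the nine derangements of $\{1,2,3,4\}$ contribute to $\det G$; the three double transpositions yield the moduli $|a_{12}|^2|a_{34}|^2$, $|a_{13}|^2|a_{24}|^2$, $|a_{14}|^2|a_{23}|^2$, while the six $4$-cycles pair into complex-conjugate pairs, giving $-2\Re(a_{12}a_{23}a_{34}a_{41})$, $-2\Re(a_{12}a_{24}a_{43}a_{31})$ and $-2\Re(a_{13}a_{32}a_{24}a_{41})$. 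Thus $\det G=0$ is a single real relation among the $a_{ij}$.

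It remains to recognise this relation as the claimed one. I would clear denominators in the target identity by multiplying through by $|a_{41}|^2|a_{32}|^2$; using the conjugate expansions $\X_1=a_{31}a_{42}a_{14}a_{23}/(|a_{41}|^2|a_{32}|^2)$ and $\X_2=a_{21}a_{43}a_{14}a_{32}/(|a_{41}|^2|a_{32}|^2)$, together with the cancellation $|\X_1|^2\Re(\X_3)=\Re(a_{12}a_{43}a_{24}a_{31})/(|a_{41}|^2|a_{32}|^2)$, every term of the target acquires this common denominator. The resulting numerator is then matched against the expansion of $\det G$ by invoking $\Re(z)=\Re(\overline z)$ and $|a_{ij}|=|a_{ji}|$ to identify each monomial. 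The delicate point is precisely this last bookkeeping — tracking the index order in each quadratic monomial and matching real parts up to conjugation — but once that dictionary is fixed the two sides agree termwise, and the proposition follows.
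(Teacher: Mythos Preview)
Your argument is correct. The first identity is indeed an immediate Hermitian-symmetry cancellation, and for the second identity the vanishing of the $4\times4$ Gram determinant with null diagonal does unfold exactly into the six-term real relation that, after dividing by $|a_{41}|^2|a_{32}|^2$, becomes \eqref{eq:cross2}. Your bookkeeping matches: the three double transpositions give the three modulus-squared terms, and each of the three conjugate pairs of $4$-cycles matches one of $\Re(\X_1)$, $\Re(\X_2)$, $|\X_1|^2\Re(\X_3)$ after clearing the common denominator and using $\Re(z)=\Re(\overline z)$.

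As for comparison with the paper: there is nothing to compare. The paper does not supply a proof of this proposition; it simply refers the reader to \cite{PP}. The argument there proceeds by normalising the quadruple (sending three of the points to fixed positions via ${\rm PU}(2,1)$) and then computing the cross-ratios explicitly in the remaining free parameters, so that both relations become polynomial identities one checks by hand. Your Gram-determinant route is a genuinely different and arguably cleaner approach: it is coordinate-free, makes transparent that the sole geometric input is $\dim\C^{2,1}=3$, and avoids any choice of normal form. The normalisation method, on the other hand, has the advantage of producing an explicit parametrisation of $\fX$, which is what \cite{PP} actually needs downstream; for the bare statement of the proposition your argument is the more economical one.
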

Equations (\ref{eq:cross1}) and  (\ref{eq:cross2}) define a 4-dimensional real subvariety of $\C^3$ which we call the {\it cross-ratio variety} $\fX$. This variety is isomorphic to the subset $\mathfrak{F'}$ of the ${\rm PU}(2,1)$ configuration space $\mathfrak{F}$ of pairwise disjoint quadruples of points in $\partial\bH^2_\C$, comprising quadruples whose points do not all lie in the same $\C$-circle. In the latter case, we have a 2--1 map between the subset $\mathfrak{F}_\R$ comprising of quadruples whose points all lie in a $\C$-circle and the subvariety $\fX_\R$ of $\fX$ defined by
$$
\fX_\R=\{(\X_1,\X_2)\in\R^2_{*}\;|\;\X_1+\X_2=1\},
$$
see \cite{F}, \cite{CG}.
For further reference, we shall need the following:
\begin{rem}\label{rem:AX}
Let $\fp$ a quadruple of pairwise distinct points in $\partial\bH^2_\C$ which do not all lie in the same $\C$-circle and let also $\X_i(\fp)$, $i=1,2,3$ be as above. Let also $a=\arg(\X_1)$, $b=\arg(\X_2)$, $c=\arg(\X_3)$. We have
$$
a=\A_1-\A_2,\quad b=-\A_2-\A_4,\quad c=\A_4-\A_1.
$$
Here,
$$
\A_1=\A(p_2,p_3,p_4),\quad \A_2=\A(p_1,p_3,p_4),\quad \A_4=\A(p_1,p_2,p_3).
$$
As for $\A_3=\A(p_1,p_2,p_4)$ we have
$$
\A_3+\A_1=\A_2+\A_4.
$$
\end{rem}

\section{The Configuration Space of Equidistant Triples}\label{sec:mod}

In this section we are going to prove Theorem \ref{thm:basic}. The proof will follow after a series of lemmas which follow below.
\subsection{The lemmas} \label{sec:lemmas}
Throughout this section we will have the following notation: We shall denote by $P$ a triple $(p_1,p_2,p_3)$ of pairwise distinct points in the Heisenberg group $\fH$. We will consider also the quadruple $\fp=(p_1,\infty,p_2,p_3)$; 
let $\X_i(\fp)$, $i=1,2,3$ be the corresponding point on the cross-ratio variety $\fX$ and let  $$a=\arg(\X_1(\fp)),\quad b=\arg(\X_2(\fp)),\quad c=\arg(\X_3(\fp)).$$
There is an important note here: points of $\fp$ {\it cannot} all lie in the same (infinite) $\C$-circle. To see this, normalise so that 
$$
p_1=(0,0),\quad p_2=(0,t),\quad p_3=(0,s),\;t,s\in\R.
$$ 
Then conditions $d(p_1,p_2)=d(p_1,p_3)=d(p_2,p_3)$ deduce
$$
|t|^{1/2}=|s|^{1/2}=|t-s|^{1/2},
$$
which cannot happen.

\begin{lem}\label{lem:1}
The triple $P=(p_1,p_2,p_3)$ is  equidistant if and only if
$$
|\X_1(\fp)|=|\X_2(\fp)|=1. 
$$
\end{lem}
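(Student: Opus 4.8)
The plan is to translate the equidistance condition directly into the absolute value of the cross-ratios via the formula
$$
|\X(p_1,p_2,p_3,p_4)|^{1/2}=\frac{d_\fH(p_4,p_2)\cdot d_\fH(p_3,p_1)}{d_\fH(p_4,p_1)\cdot d_\fH(p_3,p_2)}
$$
stated in Section \ref{sec:X-XR}. First I would apply this to the quadruple $\fp=(p_1,\infty,p_2,p_3)$ to compute $|\X_1(\fp)|$ and $|\X_2(\fp)|$ explicitly in terms of the three Heisenberg distances $d(p_1,p_2)$, $d(p_1,p_3)$, $d(p_2,p_3)$. Writing out the permutations: $\X_1(\fp)=\X(p_1,\infty,p_2,p_3)$ has $(p_1,p_2,p_3,p_4)=(p_1,\infty,p_2,p_3)$, so its square root of the modulus is $\frac{d(p_3,\infty)\,d(p_2,p_1)}{d(p_3,p_1)\,d(p_2,\infty)}$; since both $p_2$ and $p_3$ are finite points, the two distances to $\infty$ need to be handled — here I would recall (or record, from the lift formula $\bp=[-|z|^2+it,\sqrt2 z,1]^T$ together with $\binfty=[1,0,0]^T$) that $\langle\bp,\binfty\rangle$ is a nonzero constant independent of the finite point, so the ``distance to $\infty$'' factors cancel in every cross-ratio we use, leaving $|\X_1(\fp)|^{1/2}=\frac{d(p_1,p_2)}{d(p_1,p_3)}$. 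Similarly $\X_2(\fp)=\X(p_1,p_2,\infty,p_3)$ gives $|\X_2(\fp)|^{1/2}=\frac{d(p_2,p_3)}{d(p_1,p_3)}$ (again with the $\infty$-factors cancelling).

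Once these two identities are in hand the lemma is immediate in both directions. If $P$ is equidistant then all three distances are equal, so $|\X_1(\fp)|=|\X_2(\fp)|=1$. Conversely, $|\X_1(\fp)|=1$ forces $d(p_1,p_2)=d(p_1,p_3)$ and $|\X_2(\fp)|=1$ forces $d(p_2,p_3)=d(p_1,p_3)$, and these two equalities together say precisely that $P$ is equidistant. I would present the computation cleanly by just invoking the displayed modulus formula for the two relevant permutations rather than recomputing cross-ratios from scratch.

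The only genuinely delicate point — and the one I would spend a sentence or two justifying carefully — is the treatment of the point at infinity: the modulus formula as quoted is stated for quadruples of points in the Heisenberg group, whereas $\infty$ is the point at infinity of $\partial\bH^2_\C$. The clean way around this is to observe that the Heisenberg distance extends (after the usual conformal normalisation, or simply by using the lift pairings directly) so that $|\langle\bp_i,\binfty\rangle|$ is the same nonzero number for every finite point $p_i$, hence whenever $\infty$ appears once in the numerator-pairings and once in the denominator-pairings of a cross-ratio its contribution cancels exactly. Since $\infty$ occupies the slot ``$p_2$'' in $\fp$, in each of $\X_1(\fp)=\frac{\langle\bp_3,\bp_1\rangle\langle\bp_4,\bp_2\rangle}{\langle\bp_4,\bp_1\rangle\langle\bp_3,\bp_2\rangle}$ and $\X_2(\fp)$ the vector $\binfty$ indeed appears once on top and once on the bottom, so the cancellation goes through and we land on the ratios of genuine Heisenberg distances claimed above. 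With that observation recorded, the proof is a two-line verification.
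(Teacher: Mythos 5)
Your proof is correct and follows essentially the same route as the paper: both reduce the lemma to the identities $|\X_1(\fp)|^{1/2}=d(p_1,p_2)/d(p_1,p_3)$ and $|\X_2(\fp)|^{1/2}=d(p_2,p_3)/d(p_1,p_3)$ and read off the equivalence in both directions. Your careful handling of the point at infinity (the pairing $\langle\bp,\binfty\rangle=1$ for standard lifts, so the $\infty$-factors cancel) is a welcome addition, and your exponent $1/2$ is in fact the correct one --- the paper's displayed $|\X_i(\fp)|^2$ appears to be a typo, harmless since only the condition ``ratio equal to $1$'' is used.
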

\begin{proof}
Since  $P=(p_1,p_2,p_3)$ is an equidistant triple, we have $$
d(p_1,p_2)=d(p_2,p_3)=d(p_3,p_1),
$$
where $d$ is the Kor\'anyi distance. The result follows from the formulae
$$
|\X_1(\fp)|^2=\frac{d(p_2,p_1)}{d(p_3,p_1)},\quad |\X_2(\fp)|^2=\frac{d(p_3,p_2)}{d(p_3,p_1)}.
$$
Note that the above imply as well $|\X_3(\fp)|=1$.
\end{proof}

\begin{lem}\label{lem:2}
If $P=(p_1,p_2,p_3)$ is an equidistant triple then $a,b,c$ satisfy
\begin{equation}\label{eq:modeq}
\cos a+\cos b+\cos c=\frac{3}{2}.
\end{equation}
\end{lem}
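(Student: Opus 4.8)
The plan is to combine Lemma \ref{lem:1} with the second defining equation (\ref{eq:cross2}) of the cross-ratio variety $\fX$; almost nothing else is needed. Since $P=(p_1,p_2,p_3)$ is equidistant, Lemma \ref{lem:1} gives $|\X_1(\fp)|=|\X_2(\fp)|=|\X_3(\fp)|=1$, so that $\X_1(\fp)=e^{ia}$, $\X_2(\fp)=e^{ib}$, $\X_3(\fp)=e^{ic}$, and in particular $\Re(\X_1(\fp))=\cos a$, $\Re(\X_2(\fp))=\cos b$, $\Re(\X_3(\fp))=\cos c$.

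Next I would invoke Proposition \ref{prop:cross-ratio-equalities} for the quadruple $\fp=(p_1,\infty,p_2,p_3)$. This is legitimate: the four points are pairwise distinct, and, as observed just before Lemma \ref{lem:1}, they do not all lie on a common $\C$-circle, so $\fp$ determines a genuine point of $\fX$. Equation (\ref{eq:cross1}) then reads $1=1/1$ and carries no information, whereas substituting $|\X_1(\fp)|=|\X_2(\fp)|=1$ into (\ref{eq:cross2}) collapses that relation to
$$
2\Re(\X_3(\fp))=1+1-2\Re(\X_1(\fp))-2\Re(\X_2(\fp))+1,
$$
that is, $2\cos c=3-2\cos a-2\cos b$. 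Dividing by $2$ and rearranging yields (\ref{eq:modeq}), namely $\cos a+\cos b+\cos c=\tfrac32$.

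There is essentially no obstacle here: the only point that requires any care is verifying that the hypotheses of Proposition \ref{prop:cross-ratio-equalities} genuinely apply to $\fp$ — pairwise distinctness and the caveat about all points lying on a single $\C$-circle — both of which have already been settled in the text. I would emphasise that this lemma produces only the one relation (\ref{eq:modeq}); the range restriction $a,b,c\in[-2\pi/3,2\pi/3]$ appearing in Theorem \ref{thm:basic} is a separate matter, to be extracted afterwards, presumably from Remark \ref{rem:AX} expressing $a,b,c$ as differences of Cartan angular invariants together with the bound $|\A|\le\pi/2$.
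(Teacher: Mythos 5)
Your proposal is correct and follows essentially the same route as the paper: the paper likewise rewrites (\ref{eq:cross2}) in polar form as $2|\X_1||\X_2|\cos c=|\X_1|^2+|\X_2|^2-2|\X_1|\cos a-2|\X_2|\cos b+1$ and then sets $|\X_1|=|\X_2|=1$ via Lemma \ref{lem:1}, which is exactly your substitution. Your added care about the hypotheses of Proposition \ref{prop:cross-ratio-equalities} (pairwise distinctness and the non-$\C$-circle condition) is a welcome, if implicit in the paper, verification.
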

\begin{proof}
Consider the cross-ratio variety equations (\ref{eq:cross1}) and (\ref{eq:cross2}) as in the previous section.
We may rewrite (\ref{eq:cross2}) equivalently as
\begin{equation}\label{eq:cr2.1}
2|\X_1||\X_2|\cos c=|\X_1|^2+|\X_2|^2-2|\X_1|\cos a-2|\X_2|\cos b +1.
\end{equation}
If $P$ is an equidistant triple then $|\X_i(\fp)|=1$, $i=1,2,3$ and thus (\ref{eq:cr2.1}) reduces to (\ref{eq:modeq}).
\end{proof}

\begin{lem}\label{lem:2.1}
If $P=(p_1,p_2,p_3)$ is an equidistant triple,  $\fp=(p_1,\infty,p_2,p_3)$ and $(a,b,c)$ as above. If $g\in G={\rm Sim}(\fH)$, we set
$$
P'=g(P)=(g(p_1),g(p_2),g(p_3))=(p_1',p_2',p_3'),\quad \fp'=(p_1',\infty,p_2',p_3').
$$
Then
$$
a'=\arg(\X_1(\fp'))=a,\quad b'=\arg(\X_2(\fp'))=b,\quad c'=\arg(\X_3(\fp'))=c.
$$
\end{lem}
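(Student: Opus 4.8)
The statement to prove is Lemma~\ref{lem:2.1}: the arguments $a=\arg(\X_1(\fp))$, $b=\arg(\X_2(\fp))$, $c=\arg(\X_3(\fp))$ of the cross-ratios attached to the quadruple $\fp=(p_1,\infty,p_2,p_3)$ are unchanged when the equidistant triple $P$ is moved by an element $g\in G={\rm Sim}(\fH,d)$. The natural strategy is to reduce everything to the invariance of the \emph{complex} cross-ratio under ${\rm PU}(2,1)$, which was recorded in Section~\ref{sec:X-XR}. The only subtlety is that $g$ fixes the Heisenberg group but a priori moves the point $\infty\in\partial\bH^2_\C$, so one must be slightly careful about how $G$ embeds into ${\rm PU}(2,1)$ and what it does to the quadruple $\fp$.

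First I would recall that each generator of $G$ --- left translations $L_{(w,s)}$, rotations $R_\phi$, and dilations $D_r$ (the maps of type a), b), d) from Section~\ref{sec:chs}) --- extends to a holomorphic isometry of $\bH^2_\C$, i.e. to an element of ${\rm PU}(2,1)$, and moreover each of these extensions \emph{fixes the point $\infty$}: indeed $L_{(w,s)}$ is the unipotent stabiliser of $\infty$, while $R_\phi$ and $D_r$ belong to the stabiliser of the pair $\{o,\infty\}$ (they are diagonal in the Siegel model). Hence for $g\in G$ there is $\tilde g\in{\rm PU}(2,1)$ with $\tilde g(p_i)=g(p_i)=p_i'$ for $i=1,2,3$ and $\tilde g(\infty)=\infty$. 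Therefore $\tilde g$ maps the quadruple $\fp=(p_1,\infty,p_2,p_3)$ to the quadruple $\fp'=(p_1',\infty,p_2',p_3')$ \emph{entry by entry, in the same order}. Since the complex cross-ratio $\X(\cdot,\cdot,\cdot,\cdot)$ is invariant under the diagonal action of ${\rm PU}(2,1)$ (Section~\ref{sec:X-XR}), we get $\X_i(\fp')=\X_i(\fp)$ for $i=1,2,3$, and in particular the arguments agree: $a'=a$, $b'=b$, $c'=c$.

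Two small points remain to be checked. One is that the cross-ratios $\X_i(\fp')$ are well defined, i.e. $\fp'$ is again a quadruple of pairwise distinct points whose entries do not all lie in a common $\C$-circle; but $g$ is a bijection of $\fH\cup\{\infty\}$ (a similarity), so it preserves pairwise distinctness, and by the remark preceding Lemma~\ref{lem:1} no quadruple of the shape $(q_1,\infty,q_2,q_3)$ arising from an equidistant triple $(q_1,q_2,q_3)$ can have all four points on one $\C$-circle --- and $P'=g(P)$ is again equidistant since $g$ multiplies $d$ by a positive constant. The other point is that it suffices to verify the claim on the generators a), b), d) of $G$ and then compose, which is immediate since each generator lifts to an element of ${\rm PU}(2,1)$ fixing $\infty$ and such elements form a group.

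\textbf{Main obstacle.} There is no serious obstacle; the one place requiring care is the bookkeeping that the $\bH^2_\C$-extensions of the generators of $G$ really do fix $\infty$ (equivalently, that $G$ sits inside the stabiliser of $\infty$ in ${\rm PU}(2,1)$), so that the ambient isometry sends $\fp$ to $\fp'$ \emph{without permuting the four points}; if $\infty$ were moved, the matched quadruple would be $(p_1',\tilde g(\infty),p_2',p_3')$ rather than $(p_1',\infty,p_2',p_3')$ and one would have to re-express the result via the permutation symmetries of the cross-ratio. One could also note that this lemma would fail if we allowed the inversion $R$ (which does swap $o$ and $\infty$), which is precisely why $G$ is taken to consist only of maps of type a), b), d).
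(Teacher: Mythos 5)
Your proof is correct and takes the same route as the paper, which simply notes that the result is "immediate by invariance of cross-ratios"; you have usefully spelled out the key point that each generator of $G$ extends to an element of ${\rm PU}(2,1)$ fixing $\infty$, so the quadruple is mapped entry by entry and no permutation of cross-ratios is needed.
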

\begin{proof}
The proof is immediate by invariance of cross-ratios.
\end{proof}

\begin{lem}\label{lem:3}
Let $(a,b,c)\in[-2\pi/3,2\pi/3]^3$ such that it satisfies 
$$
\cos a+\cos b+\cos c=\frac{3}{2}.
$$
 Then there exists an equidistant triple $P=(p_1,p_2,p_3)$ such that if $\fp=(p_1,\infty,p_2,p_3)$ then 
$$
\arg(\X_1(\fp))=a,\quad\arg(\X_2(\fp))=b,\quad\arg(\X_3(\fp))=c.
$$
\end{lem}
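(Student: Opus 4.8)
The plan is to run the construction of Lemma \ref{lem:3} in reverse: starting from a point $(a,b,c)$ on $\calE$, build explicit lifts of four points $p_1,\infty,p_2,p_3$ in $\partial\bH^2_\C$ realising the prescribed arguments of the cross-ratios, and then verify the resulting triple $(p_1,p_2,p_3)$ is equidistant. First I would normalise using $G$: send $p_1$ to the origin $o=(0,0)$ and use a dilation so that $d(p_1,p_2)=1$; by Lemma \ref{lem:2.1} this costs nothing. The key point is that once $|\X_1|=|\X_2|=|\X_3|=1$ and the arguments are fixed, equation (\ref{eq:cr2.1}) is \emph{automatically} satisfied (it becomes precisely $\cos a+\cos b+\cos c=3/2$, which is our hypothesis), while (\ref{eq:cross1}) reads $|\X_3|^2=|\X_2|^2/|\X_1|^2=1$, also automatic. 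So the two cross-ratio variety equations hold, and by the isomorphism $\fX\cong\mathfrak{F}'$ recalled in Section \ref{sec:X-XR} there is a quadruple $\fq=(q_1,q_2,q_3,q_4)$ of pairwise distinct points in $\partial\bH^2_\C$ — not all on a $\C$-circle — with $\X_i(\fq)$ equal to the prescribed unit-modulus values.

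The work then is to convert this quadruple into a Heisenberg configuration of the required form. I would relabel $\fq$ so that the roles match $\fp=(p_1,\infty,p_2,p_3)$, i.e. set $p_1=q_1$, $\infty=q_2$, $p_2=q_3$, $p_3=q_4$; since $\mathrm{PU}(2,1)$ acts transitively on points of the boundary I can move $q_2$ to $\infty$, and then $p_1,p_2,p_3$ lie in $\fH$. One must check they are genuinely in $\fH$ (finite) and pairwise distinct, which follows from pairwise distinctness of the $q_i$ together with $q_2\mapsto\infty$. Now by Lemma \ref{lem:1}, equidistance of $(p_1,p_2,p_3)$ is equivalent to $|\X_1(\fp)|=|\X_2(\fp)|=1$, which holds by construction. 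Hence $P=(p_1,p_2,p_3)$ is equidistant, and the arguments $\arg(\X_i(\fp))$ are $a,b,c$ as desired. A small point to dispatch: the paper's normalisation $p_1=(0,0)$, $p_2=(0,t)$, $p_3=(0,s)$ showing points can't lie on a common infinite $\C$-circle was for the \emph{equidistant} case, so I should instead argue non-$\C$-circularity directly from $(a,b,c)\in[-2\pi/3,2\pi/3]^3$ via Remark \ref{rem:AX} — if all four points lay on a $\C$-circle the relevant Cartan invariants would be $\pm\pi/2$, forcing $a,b,c$ into a degenerate set, or alternatively one notes that $\fX_\R$ requires real $\X_i$, i.e. $a,b,c\in\{0,\pi\}$, incompatible with $|\X_i|=1$ and the equation unless... — so this needs a clean one-line check rather than a wave of the hand.

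The main obstacle I anticipate is \textbf{existence and consistency of signs of the Cartan invariants}, equivalently the ambiguity in passing from $(a,b,c)$ back through Remark \ref{rem:AX} to actual points. The isomorphism $\fX\cong\mathfrak{F}'$ guarantees a quadruple for \emph{any} point of $\fX$, so abstractly existence is free; the subtlety is only in making sure the quadruple we get, after moving $q_2\to\infty$, has all of $p_1,p_2,p_3$ finite and distinct — that is, that the point of $\fX$ we wrote down really corresponds to a configuration with $q_2$ a legitimate (movable) boundary point and not something forced to coincide with another. This is where I would be careful: I would either invoke the explicit coordinate model for $\fX\cong\mathfrak{F}'$ (writing down concrete lifts with $\langle\cdot,\cdot\rangle$) to exhibit $p_1,p_2,p_3$ by hand, or argue that the bad locus (two points colliding) is a lower-dimensional subvariety disjoint from our slice $|\X_1|=|\X_2|=1$. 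Given the structure already set up, the explicit-lift route is cleanest and I would take it, deferring the routine algebra of evaluating $\langle\bp_i,\bp_j\rangle$.
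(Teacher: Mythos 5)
Your outline is sound and reaches the conclusion by a genuinely different route from the paper. You reduce everything to the surjectivity of the map $\mathfrak{F}'\to\fX$ recalled in Section \ref{sec:X-XR}: you check (correctly) that $(e^{ia},e^{ib},e^{ic})$ satisfies both defining equations of $\fX$ whenever $\cos a+\cos b+\cos c=3/2$, pull back to a quadruple, normalise its second point to $\infty$, and invoke Lemma \ref{lem:1} to get equidistance. The paper does none of this abstractly: it writes down explicit lifts $\bp_1,\infty,\bp_2,\bp_3$ (in terms of $A_1=(a-b-c)/2$, $A_4=(a-b+c)/2$ and $2\eta=\arg(1-e^{ia}-e^{ib})$), computes all Hermitian products by hand, and verifies $\X_i=e^{ia},e^{ib},e^{ic}$ directly, treating separately a degenerate set $B$ of parameters where $1-e^{ia}-e^{ib}=0$ and the generic formula breaks down. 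What your route buys is brevity and no case analysis; what it costs is that the entire burden of existence is shifted onto the cited isomorphism $\fX\cong\mathfrak{F}'$, whose surjectivity is precisely the nontrivial content being established here on the slice $|\X_1|=|\X_2|=1$ --- and it is a statement that historically required correction (the paper cites both \cite{F} and \cite{CG} for it). The paper's explicit construction is self-contained on this point and additionally produces concrete representative triples. Two smaller remarks: your worry about finiteness and distinctness after sending $q_2\to\infty$ is a non-issue, since membership in $\mathfrak{F}'$ already means the four points are pairwise distinct, so after normalising $q_2$ to $\infty$ the remaining three are finite and distinct; and your ``clean one-line check'' that the configuration is not contained in a $\C$-circle is simply that $\fX_\R$ forces $\X_1,\X_2$ real with $\X_1+\X_2=1$, which is impossible when $|\X_1|=|\X_2|=1$. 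So the proposal is acceptable provided the background isomorphism is taken at face value; if it is not, the central step of your argument is exactly the gap the paper's computation fills.
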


\begin{proof}
Set $2\eta=\arg(1-e^{ia}-e^{ib})$. We have
\begin{eqnarray*}
|1-e^{ia}-e^{ib}|^2&=&3-2\cos a-2\cos b+2\cos(a-b)\\
&=&3-4\cos\left(\frac{a+b}{2}\right)\cos\left(\frac{a-b}{2}\right)+4\cos^2\left(\frac{a-b}{2}\right)-2\\
&=&4\left(\cos\left(\frac{a-b}{2}\right)-\frac{1}{2}\cos\left(\frac{a+b}{2}\right)\right)^2+\sin^2\left(\frac{a+b}{2}\right).
\end{eqnarray*}
This is strictly positive unless
$$
(a,b,c)\in B=\{(\pi/3, -\pi/3,\pm\pi/3),(-\pi/3, \pi/3,\pm\pi/3)\}. 
$$
Assume first that $(a,b,c)\notin B$ and set
$$
A_1=
\frac{a-b-c}{2},
\quad A_4=
\frac{a-b+c}{2},
$$
with $A_1,A_4\in(-\pi/2,\pi/2)$.
Notice that we have
\begin{eqnarray*}
4\cos(A_1)\cos(A_4)&=&2\cos(a-b)+2\cos c\\
&=&2\cos(a-b)+3-2\cos a-2\cos b\\
&=&|1-e^{ia}-e^{ib}|^2>0.
\end{eqnarray*}
Consider the triple $P=(p_1,p_2,p_3)$ of points in $\fH$ where
$$
p_1=\left(\sqrt{\cos(A_4)}e^{i\left(\frac{b-c}{2}-\eta\right)},\;\sin(A_4)\right),\quad p_2=(0,0),\quad p_3=\left(-\sqrt{\cos(A_1)}e^{i\left(\eta-\frac{a}{2}\right)},\;\sin(A_4)\right),
$$
and the quadruple $\fp=(p_1,\infty,p_2,p_3)$, with lifts:
\begin{eqnarray*}
&&
\bp_1=\left[\begin{matrix}
-e^{-ia/2}\\
\\
\sqrt{2\cos(A_4)}e^{-i\eta}\\
\\
e^{i(c-b)/2}
\end{matrix}\right],\quad
\infty=\left[\begin{matrix}
1\\
0\\
0
\end{matrix}\right],\quad
\bp_2=\left[\begin{matrix}
0\\
0\\
1
\end{matrix}\right],\quad
\bp_3=\left[\begin{matrix}
e^{i(b+c)/2}\\
\\
\sqrt{2\cos(A_1)}e^{i\eta}\\
\\
-e^{ia/2}
\end{matrix}\right].
\end{eqnarray*}
Now,
\begin{eqnarray*}
&&
\langle \bp_1,\infty\rangle=e^{i(c-b)/2},\quad \langle \bp_1,\bp_2\rangle=-e^{-ia/2},
\quad\langle\infty,\bp_2\rangle=1,\\
&&
 \langle\infty,\bp_3\rangle=-e^{-ia/2},
\quad\langle \bp_2,\bp_3\rangle=e^{-i(b+c)/2},
\end{eqnarray*}
and
\begin{eqnarray*}
\langle \bp_1,\bp_3\rangle&=& e^{-ia}+e^{-ib}+\sqrt{2\cos(A_1)\cos(A_4)}\cdot e^{-2i\eta}\\
&=&e^{-ia}+e^{-ib}+|1-e^{-ia}-e^{-ib}|\cdot\frac{1-e^{-ia}-e^{-ib}}{|1-e^{-ia}-e^{-ib}|}\\
&=&1.
\end{eqnarray*}
This gives
$$
\X_1=e^{ia},\quad \X_2=e^{ib},\quad \X_3=e^{ic},
$$
which proves our claim. 

\medskip

 Finally, we consider $(a,b,c)\in B$; we shall only treat the case where $a=\pi/3$, $b=-\pi/3$ and $c=\pi/3$, in other words when $\A_4=\pi/2$ and $\A_1=\pi/6$. Then we set
\begin{eqnarray*}
&&
\bp_1=\left[\begin{matrix}
\frac{-\sqrt{3}+i}{2}\\
\\
0\\
\\
\frac{1+i\sqrt{3}}{2}
\end{matrix}\right],\quad
\infty=\left[\begin{matrix}
1\\
0\\
0
\end{matrix}\right],\quad
\bp_2=\left[\begin{matrix}
0\\
0\\
1
\end{matrix}\right],\quad
\bp_3=\left[\begin{matrix}
1\\
\\
3^{1/4}\\
\\
-\frac{\sqrt{3}+i}{2}
\end{matrix}\right].
\end{eqnarray*}
All other cases can be treated in a similar manner.
\end{proof}

\begin{lem}\label{lem:4}
Let $(a,b,c)\in\calE$ and consider from Lemma \ref{lem:3} the equidistant triple  $P=(p_1,p_2,p_3)$ of points in $\fH$ which is such that 
$$
a=\arg(\X_1(\fp)),\quad b=\arg(\X_2(\fp)),\quad c=\arg(\X_3(\fp)).
$$ 
Then 
any other equidistant triple $P'=(p_1',p_2',p_3')$ such that 
$$
a=\arg(\X_1(\fp')),\quad b=\arg(\X_2(\fp')),\quad c=\arg(\X_3(\fp'))
$$  
is of the form
$
P'=g(P)
$, where $g\in G={\rm Sim}(\fH)$. 
\end{lem}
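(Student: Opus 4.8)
The plan is to show that the quadruples $\fp=(p_1,\infty,p_2,p_3)$ and $\fp'=(p_1',\infty,p_2',p_3')$ represent the same point of the cross-ratio variety $\fX$, so that they are ${\rm PU}(2,1)$-equivalent, and then to observe that the isometry implementing this equivalence must fix $\infty$ and is therefore a Heisenberg similarity.

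First, applying Lemma \ref{lem:1} to the equidistant triple $P'$ gives $|\X_i(\fp')|=1$ for $i=1,2,3$. Combined with the hypothesis $a=\arg(\X_1(\fp'))$, $b=\arg(\X_2(\fp'))$, $c=\arg(\X_3(\fp'))$ this yields
$$
\X_1(\fp')=e^{ia}=\X_1(\fp),\qquad \X_2(\fp')=e^{ib}=\X_2(\fp),\qquad \X_3(\fp')=e^{ic}=\X_3(\fp),
$$
so $\fp$ and $\fp'$ have the same image in $\fX$.

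Second, as observed in the note at the beginning of Section \ref{sec:lemmas}, the four points of $\fp$ — and, symmetrically, those of $\fp'$ — do not all lie in a common $\C$-circle, so both quadruples belong to the subset $\mathfrak{F}'$ of the configuration space on which the correspondence with $\fX$ is a bijection. Hence there is $h\in{\rm PU}(2,1)$ with $h(\fp)=\fp'$ as ordered quadruples, i.e.
$$
h(p_1)=p_1',\qquad h(\infty)=\infty,\qquad h(p_2)=p_2',\qquad h(p_3)=p_3'.
$$
Since $h$ fixes $\infty$, it lies in the stabiliser of $\infty$ in ${\rm PU}(2,1)$; restricted to $\partial\bH^2_\C\setminus\{\infty\}=\fH$, this stabiliser is exactly the group generated by Heisenberg left-translations, rotations and dilations, that is, $G={\rm Sim}(\fH,d)$. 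Putting $g=h|_{\fH}\in G$ we conclude $P'=g(P)$.

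The step demanding the most care is the identification of the ${\rm PU}(2,1)$-stabiliser of $\infty$ with $G$: one must check that a holomorphic isometry of $\bH^2_\C$ fixing $\infty$ induces on $\fH$ precisely a composition of maps of types (a), (b), (d) of Section \ref{sec:chs}, equivalently that the parabolic subgroup stabilising a boundary point of $\bH^2_\C$ is the Heisenberg group extended only by the rotation group and the dilation group. The remaining ingredient — that agreement of all three cross-ratios forces ${\rm PU}(2,1)$-equivalence of the ordered quadruples — is part of the statement that $\fX$ is isomorphic to $\mathfrak{F}'$ recalled in Section \ref{sec:X-XR}, the potentially exceptional case of four concircular points having already been excluded.
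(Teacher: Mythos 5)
Your proof is correct and follows essentially the same route as the paper: deduce $\X_i(\fp)=\X_i(\fp')$ from Lemma \ref{lem:1} and the hypothesis on arguments, invoke the bijection between $\mathfrak{F}'$ and $\fX$ (the paper cites Proposition 5.10 of \cite{PP} and Lemma 5.5 of \cite{F} for this) to get an element of ${\rm PU}(2,1)$ carrying $\fp$ to $\fp'$ and fixing $\infty$, and identify the stabiliser of $\infty$ with $G$. Your version is simply more explicit about the two points the paper leaves implicit, namely why the cross-ratios agree and why fixing $\infty$ forces the isometry into ${\rm Sim}(\fH,d)$.
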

\begin{proof}
We consider $P$, $P'$ and $\fp$, $\fp'$, respectively. Since $\X_i(\fp)=\X_i(\fp')$, $i=1,2,3$, we have from Proposition 5.10 in \cite{PP} and Lemma 5.5 in \cite{F} that since not all points in $\fp$ lie in the same $\C$-circle, there exists a $g\in{\rm PU}(2,1)$  such that $g(p_i)=p_i'$, $i=1,2,3$ and $g(\infty) =\infty$.
That is, 
$g\in G$ and the proof is complete.
\end{proof}

\subsection{The Equidistant surface}
Equation (\ref{eq:modeq}) is the equation of a hypersurface in $\R^3$ which we shall call {\it equidistant hypersurface} and denote it by $\calE$. This hypersurface comprises of infinitely many connected components. Notice that we have
$$
(a,b,c)\in\calE\implies (a+2n_1\pi,b+2n_2\pi,c+2n_3\pi)\in\calE,\quad n_1,n_2,n_3\in\mathbb{Z}.
$$ 
On the other hand, since for instance
$$
\cos a=\frac{3}{2}-\cos b-\cos c\ge -\frac{1}{2},
$$
we have that $\cos a$, and in the same manner $\cos b$ and $\cos c$, are $\ge -1/2$. We deduce that the connected components of $\calE$ may be taken by transporting the central component where
$$
(a,b,c)\in[-2\pi/3,2\pi/3]^3,
$$
by multiples of $2\pi$ in all possible directions, see the figure where the central component of $\calE$  is clearly shown.

\begin{figure}
\title{Central component of equidistant surface}
	\centering
	\includegraphics[width=\linewidth]{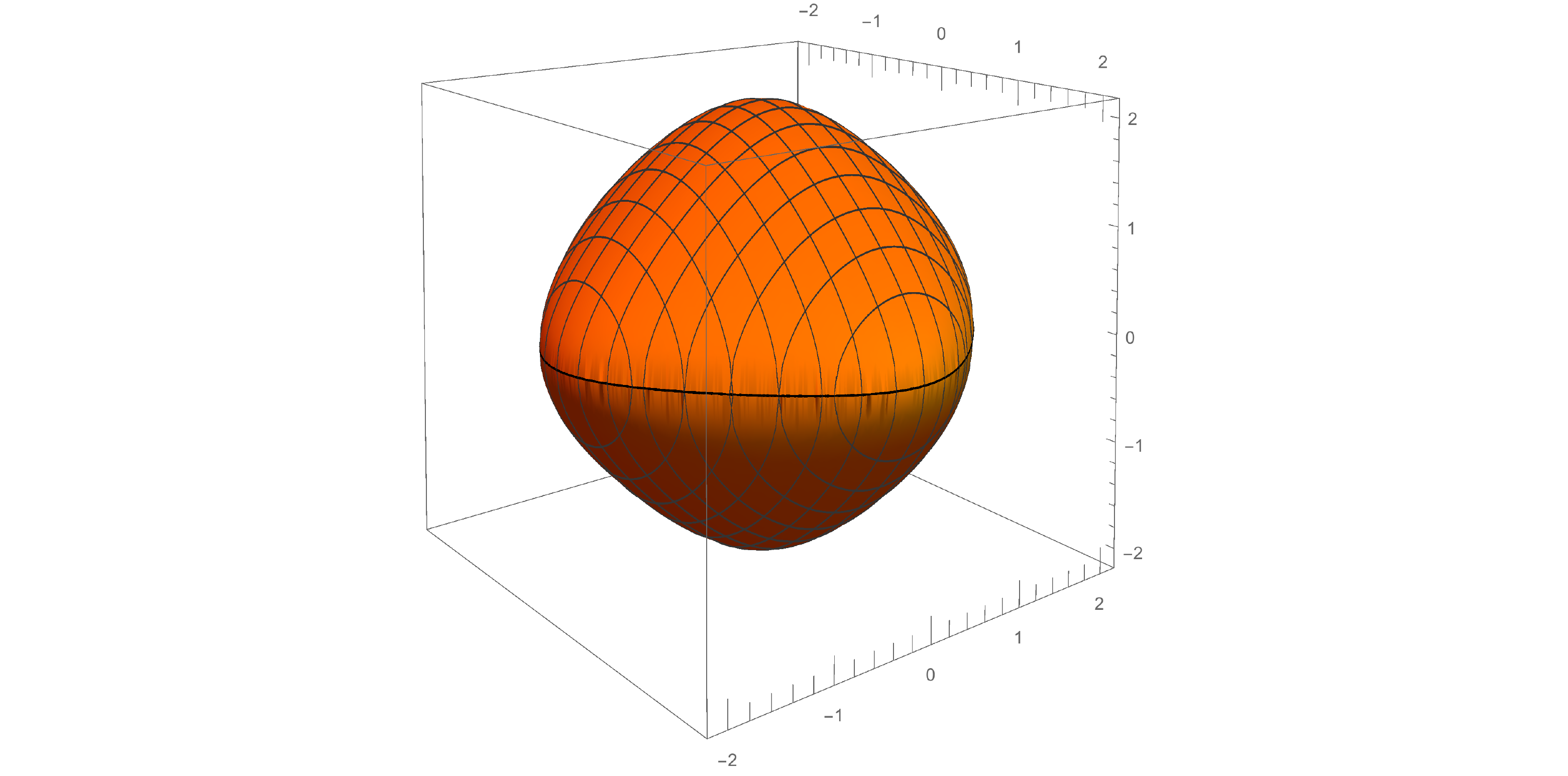}
\end{figure}

\subsection{Proof of Theorem \ref{thm:basic}}
Given an equidistant triple $P=(p_1,p_2,p_3)$ we consider the quadruple $\fp=(p_1,\infty,p_2,p_3)$ in the ${\rm PU}(2,1)$-configuration space of pairwise distinct points on the boundary $\partial\bH^2_\C$ of complex hyperbolic plane. Let $\X_i(\fp)$, $i=1,2,3$ be the cross-ratios associated to $\fp$; the triple $(\X_1(\fp),\X_2(\fp),\X_3(\fp))$ defines a point in the cross=ratio variety $\fX$.  In particular, in this case we have by Lemma \ref{lem:1} that $|\X_i(\fp)|=1$, $i=1,2,3$ and moreover, if
$$
a=\arg(\X_1(\fp)),\quad b=\arg(\X_1(\fp)),\quad c=\arg(\X_3(\fp)),
$$ 
then from Lemma \ref{lem:2}
$$
\cos a+\cos b+\cos c=\frac{3}{2}.
$$
By Lemma \ref{lem:2.1} this equation is invariant by the diagonal action of $G$ on the space of equidistant triples $\calET$:
This proves that the map
$$
\fET\to\calE,\quad [P]\mapsto(a,b,c),
$$
is well-defined.

Conversely, if $(a,b,c)\in\calE$ where $(a,b,c)\in[-2\pi/3,2\pi/3]$, by Lemma \ref{lem:3} there exists a $P=(p_1,p_2,p_3)\in\calET$ such that if $\fp=(p_1,\infty,p_2,p_3)$ then $a=\arg(\X_1(\fp))$, $b=\arg(\X_2(\fp))$ and $c=\arg(\X_3(\fp))$; therefore $\fET\to\calE$ is onto. Finally, 
by Lemma \ref{lem:4} 
 $\fET\to\calE$ is 1--1 when the points in $\fp$ do not all lie in the same $\C$-circle and 2--1 when all points in $\fp$ lie in the same $\C$-circle. This concludes the proof of Theorem \ref{thm:basic}.\qed

\subsection{The $\C$-circle case}\label{sec:C}
The subset $\fET_\C$ of $\fET$ comprising equivalent equidistant triples of points lying on a $\C$-circle is of special interest. We can show in an elementary way that $\fET_\C$ is just two points on the equidistant hypersurface $\calE$. We start with a lemma:
\begin{lem}\label{lem:A-X}
With the assumptions of Section \ref{sec:lemmas} let $P=(p_1,p_2,p_3)$ and $\fp=(p_1,\infty,p_2,p_3)$. Then
\begin{equation*}
a+b+c=-2\A_2=-2\A(p_1,p_2,p_3)=-2\A.
\end{equation*} 
\end{lem}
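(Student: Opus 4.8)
The plan is to use Remark \ref{rem:AX}, which already records the relation between the arguments $a,b,c$ of the three cross-ratios of a generic quadruple and the Cartan angular invariants of its triples of points. Concretely, for the quadruple $\fp=(p_1,\infty,p_2,p_3)$ we must first identify which triples the symbols $\A_1,\A_2,\A_4$ of the Remark refer to: with the Remark's convention (applied to $\fp=(q_1,q_2,q_3,q_4)$ with $q_1=p_1$, $q_2=\infty$, $q_3=p_2$, $q_4=p_3$) we get $\A_1=\A(q_2,q_3,q_4)=\A(\infty,p_2,p_3)$, $\A_2=\A(q_1,q_3,q_4)=\A(p_1,p_2,p_3)$, and $\A_4=\A(q_1,q_2,q_3)=\A(p_1,\infty,p_2)$, together with $a=\A_1-\A_2$, $b=-\A_2-\A_4$, $c=\A_4-\A_1$.

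Adding these three identities gives immediately $a+b+c=(\A_1-\A_2)+(-\A_2-\A_4)+(\A_4-\A_1)=-2\A_2=-2\A(p_1,p_2,p_3)$, which is exactly the claimed formula. So the content of the lemma is really just this telescoping sum; the only thing that needs care is the bookkeeping matching the indices in Remark \ref{rem:AX} to the specific quadruple $(p_1,\infty,p_2,p_3)$, and making sure the hypothesis of the Remark is met — namely that the four points do not all lie in a common $\C$-circle. But that hypothesis was already verified in the preamble to Section \ref{sec:lemmas} (the normalisation $p_1=(0,0)$, $p_2=(0,t)$, $p_3=(0,s)$ argument shows a collinear-on-a-$\C$-circle configuration is impossible for an equidistant triple), so there is nothing further to check there.

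I would therefore write the proof in three short moves: (i) recall the quadruple $\fp=(p_1,\infty,p_2,p_3)$ and note it is not contained in a $\C$-circle so Remark \ref{rem:AX} applies; (ii) read off from the Remark the three equations $a=\A_1-\A_2$, $b=-\A_2-\A_4$, $c=\A_4-\A_1$ with $\A_2=\A(p_1,p_2,p_3)$; (iii) sum them to conclude $a+b+c=-2\A_2$. There is essentially no obstacle here — the lemma is a one-line consequence of an already-stated remark — so the only "hard part" is cosmetic: being explicit about the index substitution so the reader can verify that $\A_2$ in the Remark corresponds to $\A(p_1,p_2,p_3)$ for this particular ordering, rather than to one of the other triples. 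One might also remark in passing that this is consistent with Lemma \ref{lem:2}: an equidistant triple forces $|\X_i|=1$, and then $a+b+c=-2\A$ together with $\cos a+\cos b+\cos c=3/2$ and $\A\in[-\pi/2,\pi/2]$ pins things down, but that consistency check is not needed for the proof itself.
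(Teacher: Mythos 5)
Your proof is correct and is essentially identical to the paper's: the author likewise sums the three identities $a=\A_1-\A_2$, $b=-\A_2-\A_4$, $c=\A_4-\A_1$ from Remark \ref{rem:AX} to get $-2\A_2=-2\A(p_1,p_2,p_3)$. Your extra bookkeeping on the index substitution and the non-$\C$-circle hypothesis is sound but not spelled out in the paper's one-line proof.
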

\begin{proof}
We have
$$
a+b+c=\A_1-\A_2-\A_2-\A_4+\A_4-\A_1=-2\A_2.
$$
\end{proof}

We now prove
\begin{prop}\label{prop:Cequid}
The subset $\fET_\C$ of $\fET$ comprising equivalence classes of equidistant triples of points lying on the same $\C$-circle is on bijection with the points $(\pi/3,\pi/3,\pi/3)$ or $(-\pi/3,-\pi/3,-\pi/3)$ of the equidistant surface $\calE$.
\end{prop}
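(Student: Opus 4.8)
The plan is to combine Lemma~\ref{lem:A-X} with the defining equation of $\calE$. By Proposition~\ref{prop:cross-ratio-equalities}'s companion (the Cartan invariant part), equidistant triples of points lying on a $\C$-circle are exactly those for which $\A=\A(p_1,p_2,p_3)=\pm\pi/2$. First I would fix such an equidistant triple $P=(p_1,p_2,p_3)$, form $\fp=(p_1,\infty,p_2,p_3)$, and let $(a,b,c)$ be the arguments of the three cross-ratios as in Section~\ref{sec:lemmas}. By Theorem~\ref{thm:basic} (or directly Lemmas~\ref{lem:1} and~\ref{lem:2}) the point $(a,b,c)$ lies on $\calE$, so it satisfies both $(a,b,c)\in[-2\pi/3,2\pi/3]^3$ and $\cos a+\cos b+\cos c=3/2$. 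By Lemma~\ref{lem:A-X} it also satisfies $a+b+c=-2\A=\mp\pi$.

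Next I would solve the system. Substituting $c=\mp\pi-a-b$ into the equation of $\calE$ and using $\cos(\mp\pi-a-b)=-\cos(a+b)$ gives
\begin{equation*}
\cos a+\cos b-\cos(a+b)=\frac{3}{2}.
\end{equation*}
Writing $a=u+v$, $b=u-v$ with $u=(a+b)/2$, $v=(a-b)/2$, the left-hand side becomes $2\cos u\cos v-\cos 2u = 2\cos u\cos v - 2\cos^2 u+1$, so the equation reads $2\cos u(\cos v-\cos u)=1/2$, i.e. $4\cos u\cos v-4\cos^2 u=1$. Completing the square in $\cos v$ (or rather rewriting as a quadratic) one finds $(2\cos v-\cos u)^2 = \cos^2 u - 1 + (2\cos v - \cos u)^2$... more cleanly: from $4\cos^2 u - 4\cos u\cos v + 1 = 0$ the discriminant in $\cos u$ is $16\cos^2 v - 16 = -16\sin^2 v \le 0$, forcing $\sin v = 0$ and $\cos u = \cos v = \pm 1$; since $|u|,|v|\le 2\pi/3$ this yields $v=0$ and $u=0$, hence $a=b=0$ — but then $\cos c = 3/2$, impossible. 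Therefore I must instead take $\A=-\pi/2$ giving $a+b+c=\pi$ (or the reverse), and here is the point where one has to be careful about \emph{which} sign of $\A$ is compatible with the normalization of $\fp$; I expect exactly one sign to survive for each of the two target points, and a short direct check — plugging $(a,b,c)=(\pi/3,\pi/3,\pi/3)$ and $(-\pi/3,-\pi/3,-\pi/3)$ into $\calE$ and into $a+b+c=\mp\pi$ — confirms both lie on $\calE$ and that these are the only solutions of the combined system after the symmetry reduction above is redone correctly (the quadratic forces $\cos u=\cos v$, hence $a=b$, then the cubic $2\cos a - \cos 2a = 3/2$ factors as $2(2\cos a+1)(\cos a - 1)=\ldots$ giving $\cos a = 1/2$, i.e. $a=\pm\pi/3$, and then $c=a$).

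Having shown that an equidistant $\C$-circle triple forces $(a,b,c)\in\{(\pi/3,\pi/3,\pi/3),(-\pi/3,-\pi/3,-\pi/3)\}$, I would finish by exhibiting, for each of these two points, an actual equidistant triple on a $\C$-circle realizing it — for the point $(\pi/3,\pi/3,\pi/3)$ one can simply reuse or adapt the explicit lift written in the last paragraph of the proof of Lemma~\ref{lem:3} (the case $a=\pi/3$, $b=-\pi/3$, $c=\pi/3$ is written there, so I would write the analogous lift for $a=b=c=\pi/3$, or better, take three points equally spaced on a finite $\C$-circle and compute), and the point $(-\pi/3,-\pi/3,-\pi/3)$ is obtained by applying the involution $j$, which reverses the sign of all Cartan invariants and hence of $(a,b,c)$. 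Combined with Theorem~\ref{thm:basic}, which gives the bijection $\fET\leftrightarrow\calE$, this identifies $\fET_\C$ with exactly these two points. The main obstacle I anticipate is purely bookkeeping: pinning down the correct sign of $\A$ versus $a+b+c$ so that the algebra genuinely lands on $\pm\pi/3$ rather than the spurious solution $a=b=0$, and making sure the realizing triples actually lie on a single $\C$-circle (equivalently, that $\A=\pm\pi/2$ for them) rather than merely being equidistant.
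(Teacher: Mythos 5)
Your strategy for the main direction is the same as the paper's: use Lemma~\ref{lem:A-X} and $\A=\pm\pi/2$ to reduce equation (\ref{eq:modeq}) to $\cos a+\cos b-\cos(a+b)=\tfrac32$, then solve. The paper does the algebra by completing squares, rewriting the equation as $(\cos a+\cos b-1)^2+(\sin a-\sin b)^2=0$, which gives $\cos a+\cos b=1$ and $\sin a=\sin b$, hence $a=b=c=\pm\pi/3$ at once. Your substitution $a=u+v$, $b=u-v$ with a discriminant argument is an equally valid route, but as written it contains an arithmetic slip that sends you down a false trail: when the discriminant of $4X^2-4X\cos v+1=0$ vanishes (i.e.\ $\sin v=0$), the double root is $X=\cos v/2=\pm\tfrac12$, \emph{not} $\pm1$. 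Since $|v|=|a-b|/2\le 2\pi/3$ forces $v=0$ and $\cos v=1$, you get $\cos u=\tfrac12$ directly, i.e.\ $a=b=\pm\pi/3$ and then $c=\pm\pi/3$. There is no spurious solution $a=b=0$, and the ensuing worry about which sign of $\A$ ``survives'' is a red herring: $\cos(\pi-a-b)=\cos(-\pi-a-b)=-\cos(a+b)$, so both signs of $\A$ yield the identical equation. Your parenthetical ``redo'' also misstates the mechanism (it is $\sin v=0$, not ``$\cos u=\cos v$'', that forces $a=b$), though the final answer $\cos a=\tfrac12$ is right.

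For the converse you genuinely diverge from the paper. You propose to exhibit explicit witnesses (equally spaced points on a finite $\C$-circle, with the involution $j$ flipping the sign), whereas the paper argues algebraically: for a triple on a $\C$-circle with $\arg\X_i=\pm\pi/3$, equation (\ref{eq:cr2.1}) becomes $|\X_1||\X_2|=|\X_1|^2+|\X_2|^2-|\X_1|-|\X_2|+1$, which factors as $\bigl(|\X_1|-\tfrac{|\X_2|}{2}-\tfrac12\bigr)^2+\tfrac34(|\X_2|-1)^2=0$, forcing $|\X_1|=|\X_2|=1$ and hence equidistance. Your version does slightly more for surjectivity, since it actually produces configurations realizing each of the two points, while the paper's is conditional on such configurations existing; your concern about verifying that the witnesses really have $\A=\pm\pi/2$ is the right thing to check and is immediate for equally spaced points on a $\C$-circle. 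Once the discriminant slip is fixed, the proposal is sound.
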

\begin{proof}
We will show that three equidistant points lie on a $\C$-circle if and only if the equidistant hypersurface  reduces to the point $(\pi/3,\pi/3,\pi/3)$ or $(-\pi/3,-\pi/3,-\pi/3)$.
We start by assuming that the three points lie on a $\C$-circle, that is, $\A=\pm\pi/2$. Here, $\A=\A(p_1,p_2,p_3)$. Since from Lemma \ref{lem:A-X} we have
$$
a+b+c=-2\A,
$$
Equation (\ref{eq:modeq}) becomes
$$
\cos a+\cos b-\cos(a+b)=\frac{3}{2}.
$$
This is written equivalently as
$$
2(\cos a+\cos b)-2\cos a\cos b+2\sin a \sin b=\cos^2a+\sin^2a
+\cos^2b+\sin^2b+1,
$$
or,
$$
(\cos a+\cos b)^2-2(\cos a+\cos b)+1+(\sin a-\sin b)^2=0,
$$
that is,
$$
(\cos a+\cos b-1)^2+(\sin a-\sin b)^2=0.
$$
Therefore we obtain,
$$
\cos a+\cos b=1\quad\text{and}\quad \sin a=\sin b.
$$
This gives
$$
a=b=c=\pm\frac{\pi}{3}.
$$
Conversely, suppose that three points lie on a $\C$-circle and $\arg(\X_i)=\pm\frac{\pi}{3}$. Then $p_i$ are equidistant. Indeed, from equation (\ref{eq:cr2.1}) we have
$$
|\X_1||\X_2|=|\X_1|^2+|\X_2|^2-|\X_1|-|\X_2|+1.
$$
Factoring out we may write equivalently
$$
\left(|\X_1|-\frac{|\X_2|}{2}-\frac{1}{2}\right)^2+\frac{3}{4}(|\X_2|-1)^2=0.
$$
This gives $|\X_1|=|\X_2|=1$ and therefore the points are equidistant.
\end{proof}

\end{document}